\patchcmd\Gread@eps{\@inputcheck#1 }{\@inputcheck"#1"\relax}{}{}
\newtheorem{theorem}{Theorem}[section]
\newtheorem{observation}[theorem]{Observation}
\newtheorem{corollary}[theorem]{Corollary}
\newtheorem{lemma}[theorem]{Lemma}
\newtheorem{definition}[theorem]{Definition}
\newcommand{\qed}{\hfill $\square$\medskip}
\newcommand{\ve}{{\bf VE}}      
\newcommand{\spo}{{\bf support}}     
\begin{document}

\def\nt{\noindent}

\title{Connected coalitions in graphs}

\author{\small 	
	Saeid Alikhani$^{1}$,  
     Davood Bakhshesh$^2$, 
	 Hamidreza Golmohammadi$^{3}$,
    Elena V. Konstantinova$^{3,4}$
}


\maketitle

\begin{center}
	
	$^1$Department of Mathematical Sciences, Yazd University, 89195-741, Yazd, Iran\\

     $^{2}$Department of Computer Science, University of Bojnord, Bojnord, Iran\\ 
	
	$^{3}$Novosibirsk State University, Pirogova str. 2, Novosibirsk, 630090, Russia\\ 
	
	$^4$Sobolev Institute of Mathematics, Ak. Koptyug av. 4, Novosibirsk,
	630090, Russia\\
	\medskip
	{\tt alikhani@yazd.ac.ir ~~  d.bakhshesh@ub.ac.ir ~~h.golmohammadi@g.nsu.ru ~~e\_konsta@math.nsc.ru}
\end{center}

\begin{abstract}
    The connected coalition in a graph $G=(V,E)$ consists of two disjoint sets of vertices $V_{1}$ and $V_{2}$, neither of which is a connected dominating set but whose union $V_{1}\cup  V_{2}$, is a connected dominating set. A connected coalition partition in a
 graph $G$ of order $n=|V|$ is a vertex partition $\psi$ = $\{V_1, V_2,..., V_k \}$ such that every set $V_i \in \psi$ either
is a connected dominating set consisting of a single vertex of degree $n-1$, or is not a connected dominating set but
forms a connected coalition with another set $V_j\in \psi$ which is not a connected dominating set. 
 The connected coalition number, denoted by $CC(G)$, 
is the maximum cardinality of a connected coalition partition of $G$.
  In this paper, we initiate the study of    connected coalition in graphs, and  present some basic results. Precisely, we characterize all graphs that  have a  connected coalition partition. Moreover, we show that for any graph $G$ of order $n$ with $\delta(G)=1$ and with no full vertex, it holds that $CC(G)<n$. Furthermore, we show that for  any tree $T$, $CC(T)=2$. Finally, we present two polynomial-time algorithms that  for a given connected graph $G$ of order $n$  determine whether $CC(G)=n$ or $CC(G)=n-1$.
\end{abstract}

\noindent{\bf Keywords:}   Coalition; coalition partition coalition; Tree;  Corona product.

\medskip
\noindent{\bf AMS Subj.\ Class.:}  05C60.


\section{Introduction} 

Let $G = (V,E)$ be a simple graph with the vertex set $V$ and the edge set $E$. The {\em open neighborhood} of a vertex $v \in V$ is defined as $N(v) = {u \mid uv \in E}$, and its {\em closed neighborhood} as $N[v] = N(v) \cup {v}$. Each vertex $u \in N(v)$ is referred to as a {\em neighbor} of $v$, and $|N(v)|$ is referred to as the {\em degree} of $v$, denoted by $deg(v)$. A vertex $v$ in $G$ is considered {\em pendant} if its open neighborhood $N(v)$ has only one vertex. This vertex is referred to as the {\em support vertex} of $v$, denoted by $\spo(v)$. An edge is considered pendant if one of its vertices is pendant. In a tree $T$, a vertex of degree one is referred to as a {\em leaf}, and the vertex adjacent to it is referred to as a support vertex. The set of leaves in a tree $T$ is denoted by $L(T)$, and its cardinality by $l(T)$. In a graph $G$ with $n = |V|$ vertices, a vertex of degree $n-1$ is referred to as a {\em full} or {\em universal vertex}, and a vertex of degree $0$ is referred to as an {\em isolate}. The minimum degree of a graph $G$ is denoted by $\delta(G)$, and the maximum degree by $\Delta(G)$. A subset $V_{i} \subseteq V$ is referred to as a {\em singleton} set if $|V_i| = 1$, or a non-singleton set if $|V_i| \ge 2$. A graph without any isolated vertices is referred to as a non-isolated graph. 

A set $S \subseteq V$ in a graph $G$ is considered to be a {\em dominating} set if every vertex in $V \setminus S$ has at least one neighbor in the set $S$. A {\em connected dominating} set $D$ is defined as a dominating set such that the subgraph induced by the vertices in $D$ is connected. The {\em connected domination number}, denoted by $\gamma_{c}(G)$, is the minimum size of a connected dominating set in the graph $G$. Connected domination was first introduced in 1979 by Sampathkumar and Walikar, based on a suggestion by S.T. Hedetniemi \cite{13}. Over the past two decades, it has garnered significant interest due to its important applications in Wireless Sensor Networks. For more information, readers are referred to the literature including \cite{10,11,12}.

A {\em domatic partition} is a partition of the vertex set into dominating sets. The connected domatic partition is a similar partition into connected dominating sets. The maximum size of a domatic partition is called the {\em domatic number}, denoted by $d(G)$. The maximum size of a connected domatic partition is called the {\em connected domatic number}, denoted by $d_{c}(G)$. The domatic number was first introduced by Cockayne and Hedetniemi \cite{4}, and the connected domatic number was introduced by Zelinka in \cite{16}. Further information on these concepts can be found in sources such as \cite{6,14,15,16}.

The idea of coalitions and coalition partitions was first introduced in \cite{7} and has since then been studied in the field of graph theory, as seen in works such as \cite{2,3,8,9}. The definition of coalitions and coalition partitions were based on general graph properties, but the main focus was on their relation to the concept of dominating sets. A {\em coalition} $\pi$ in a graph $G$ is defined as two disjoint sets of vertices, $V_1$ and $V_2$, that individually cannot dominate the graph, but their union $V_1 \cup V_2$ is able to dominate the graph. The sets $V_1$ and $V_2$ are referred to as {\em coalition partners} in $\pi$. A {\em coalition partition}, also known as $c$-partition, in a graph $G$ is a partition of the vertices of $G$ into sets $\pi = \{V_1, V_2, \ldots, V_k\}$ such that each $V_i$ in $\pi$ is either a singleton dominating set of $G$ or a non-dominating set that forms a coalition with another non-dominating set $V_j \in \pi$. The {\em coalition number} $C(G)$ of a graph is the maximum number of sets that can be present in a $c$-partition of $G$. A $c$-partition of $G$ with $C(G)$ sets is referred to as a $C(G)$-partition.

For every $c$-partition $\pi$ of a graph $G$, there is a corresponding graph called the {\em coalition graph} of $G$ with respect to $\pi$, denoted as $CG(G, \pi)$. The vertices of this graph correspond one-to-one with the sets of $\pi$, and two vertices are adjacent in $CG(G, \pi)$ if and only if their corresponding sets form a coalition. The study of coalition graphs, particularly for paths, cycles, and trees, was conducted in \cite{8}. The concept of total coalition was introduced and explored in \cite{1}, while the coalition parameter for cubic graphs of order at most 10 was investigated in \cite{2}.

According to Section 4 of reference \cite{7}, there are open problems and areas for future research which suggest exploring connected dominating $c$-partition. Inspired by this, our focus is on the examination of connected coalitions and their partitions.

   
 In Section 2, we define and  discuss some properties of connected coalitions. In Section 3, we  determine the connected coalition number of graphs with pendant edge. Furthermore, we consider the connected coalition of trees in Section 4. In Section 5, we present two polynomial-time algorithms that  for a given connected graph $G$ of order $n$  determine whether $CC(G)=n$ or $CC(G)=n-1$. Finally, we present some open problems for future works in Section 6. 
 
\section{Introduction to connected coalition}

In this section, we first state the definition of the connected coalition and connected coalition partition.
 
 \begin{definition}[Connected coalition]
For a graph $G$ with vertex set $V$, two sets $V_1,V_2\subseteq V$ form a {connected coalition}, if neither $V_1$ nor $V_2$ is a connected dominating set but $V_1\cup V_2$ is a connected dominating set in $G$. 
 \end{definition} 
 
\begin{definition}[Connected coalition partition]\label{2.2} 
A connected coalition partition in a graph $G$ is a vertex partition $\psi$ = $\{V_1, V_2,\ldots, V_k \}$ such that every set $V_i$ of $\psi$ either
is a connected dominating set consisting of a single vertex of degree $n-1$, or is not a connected dominating set but
forms a connected coalition with another set $V_j\in \psi$ which is not a connected dominating set. The maximum cardinality of a connected coalition partition of $G$ is called the {\it connected coalition number} of $G$, denoted by $CC(G)$. A connected $c$-partition of a graph $G$ with the cardinality $CC(G)$ is denoted by $CC(G)$-partition. 
\end{definition}

Considering the graph $G$ should be connected, we have the following trivial observation.
\begin{observation}
\label{obs1}
For any disconnected graph $G$ of order $n\geq 2$, we have $CC(G)=0$.
\end{observation}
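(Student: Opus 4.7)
The plan is to argue that a disconnected graph admits no connected dominating set whatsoever, which immediately forces $CC(G)=0$ through Definition~\ref{2.2}.

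First, I would show that if $G$ has components $C_1, C_2, \ldots, C_r$ with $r \geq 2$, then any dominating set $D$ of $G$ must intersect every component. This is because a vertex lying in component $C_i$ has all of its neighbors inside $C_i$, so it can only be dominated by a vertex in $D \cap C_i$. Consequently the induced subgraph $G[D]$ contains vertices from at least two different components of $G$, with no edges between them, so $G[D]$ is itself disconnected. Therefore $G$ has no connected dominating set.

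Next, I would examine the two ways a block $V_i$ of a connected coalition partition $\psi$ can be legitimate according to Definition~\ref{2.2}. Option one requires $V_i$ to be a singleton connected dominating set whose unique vertex has degree $n-1$; but such a universal vertex would make $G$ connected, contradicting our hypothesis. Option two requires $V_i$ to form a connected coalition with some $V_j \in \psi$, meaning $V_i \cup V_j$ is a connected dominating set; this is impossible by the previous paragraph. Thus no partition of $V(G)$ can satisfy the axioms of a connected coalition partition, so by the convention that $CC(G)$ is the supremum of the empty collection of admissible cardinalities, we conclude $CC(G)=0$.

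There is no real obstacle here; the statement is essentially a sanity check that the definition behaves as expected on disconnected graphs, and the only minor point worth flagging explicitly in the write-up is the observation that both clauses of Definition~\ref{2.2} fail simultaneously, so that no block of any putative partition is admissible.
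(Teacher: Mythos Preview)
Your argument is correct: you verify that a disconnected graph has no connected dominating set (since any dominating set must meet every component and hence induces a disconnected subgraph), and then observe that both clauses of Definition~\ref{2.2} become vacuous, forcing $CC(G)=0$. The paper itself gives no proof at all---it simply labels the statement a ``trivial observation'' and moves on---so your write-up is a strictly more detailed treatment of the same underlying idea.
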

Now, by the following Theorem we characterize the graphs $G$ having $C(G)=1$.
\begin{lemma}
\label{lemk1}
For any graph $G$, $CC(G)=1$ if and only if $G=K_1$.
\end{lemma}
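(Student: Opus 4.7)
The plan is to prove the two directions separately, and both are short consequences of Definition 2.2 applied to a single-block partition.

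For the backward direction, assume $G=K_1$ with vertex $v$. Then the only possible partition of $V(G)$ is $\psi=\{\{v\}\}$, which has cardinality $1$. I would check that $\psi$ is a valid connected coalition partition: the block $\{v\}$ is trivially a connected dominating set, it is a singleton, and its unique vertex has degree $n-1=0$, so it satisfies the first clause of Definition 2.2. Since no partition of a one-element set can have more than one block, $CC(K_1)=1$.

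For the forward direction, assume $CC(G)=1$, so $G$ admits a connected coalition partition $\psi=\{V_1\}$ with exactly one block, which forces $V_1=V(G)$. By Definition 2.2, $V_1$ must satisfy one of the two alternatives. The second alternative requires the existence of another block $V_j\in\psi$ that is not a connected dominating set and with which $V_1$ forms a connected coalition; but $\psi$ has only one block, so no such $V_j$ exists. Hence $V_1$ must satisfy the first alternative, namely $V_1$ is a connected dominating set consisting of a single vertex of degree $n-1$. In particular $|V_1|=1$, and since $V_1=V(G)$, this gives $|V(G)|=1$, i.e.\ $G=K_1$.

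The only subtle point — which is really the heart of the argument — is noticing that Definition 2.2 does not allow an arbitrary connected dominating set to stand alone as a block: the ``CDS alone'' clause is restricted to singletons formed by a full vertex. This closes the potential loophole where $V_1=V(G)$ could be accepted merely for being a connected dominating set, and it is what forces $n=1$. No further case analysis or auxiliary lemma is needed.
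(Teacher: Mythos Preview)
Your proof is correct and follows essentially the same approach as the paper's own proof: both apply Definition~2.2 to the single-block partition $\{V(G)\}$ and observe that the only way a lone block can satisfy the definition is via the ``singleton full vertex'' clause, forcing $|V(G)|=1$. The paper compresses this into one sentence (``By Definition~\ref{2.2}, we must have $|V|=1$''), whereas you have spelled out explicitly why the second alternative is unavailable and why the first forces $n=1$; but the underlying argument is identical.
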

\begin{proof}
If  $CC(G)=1$, then $\{V\}$ is a $CC(G)$-partition. By Definition \ref{2.2},  we must have $|V|=1$. So, it is clear that $G=K_1$. Conversely, if $G=K_1$, clearly we have $CC(G)=1$.\qed
\end{proof}

Now, we prove the following lemma.
\begin{lemma} \label{gelem}
	 If $G$ is a connected graph of order $n>1$ with no full vertex, then $CC(G)\ge 2d_c(G)$.
\end{lemma}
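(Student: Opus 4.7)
The plan is to start from a connected domatic partition $\pi = \{D_1, D_2, \ldots, D_k\}$ of $G$ with $k = d_c(G)$, and refine it to a connected coalition partition of size $2k$. Because $G$ has no full vertex, no singleton can be a connected dominating set, so $|D_i| \ge 2$ for every $i$. For each $i$, I would choose a vertex $v_i \in D_i$ that is a leaf of some spanning tree $T_i$ of $G[D_i]$, and set $A_i = \{v_i\}$ and $B_i = D_i \setminus \{v_i\}$. The resulting family
\[
\psi = \{A_1, B_1, A_2, B_2, \ldots, A_k, B_k\}
\]
is a vertex partition of $V(G)$ of size $2k$. Since $A_i \cup B_i = D_i$ is a connected dominating set by assumption, the pair $(A_i, B_i)$ is a connected coalition whenever both $A_i$ and $B_i$ are non-dominating; granting this, $\psi$ is a valid connected coalition partition by Definition \ref{2.2}, which gives $CC(G) \ge 2\,d_c(G)$.

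The routine verifications are that $A_i = \{v_i\}$ is not a connected dominating set (it is a singleton and $v_i$ is not full by hypothesis) and that $G[B_i]$ is connected because it contains the subtree $T_i - v_i$. The crux of the argument---and the main obstacle---is showing that $B_i$ fails to dominate $V$. The natural way is to pick the leaf $v_i$ so that it admits a private external neighbor with respect to $D_i$: a vertex $u \in V \setminus D_i$ whose only neighbor in $D_i$ is $v_i$. Then $u$ is not dominated by $B_i$, so $B_i$ cannot be a connected dominating set.

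In the degenerate case where every $v \in D_i$ is ``redundant'', meaning $D_i \setminus \{v\}$ is still a connected dominating set for each $v \in D_i$, no simple leaf-split works directly. To handle this one uses that $|D_i| \ge 3$ to cut $T_i$ along an interior edge, obtaining a bipartition $A_i \cup B_i$ with $|A_i|,|B_i| \ge 2$, and then a private-neighbor counting argument inside $D_i$ forces at least one of the two parts to fail to dominate a vertex of $V \setminus D_i$; after possibly swapping the roles of $A_i$ and $B_i$, both parts become non-dominating, preserving the coalition structure and hence the $2k$-part partition $\psi$.
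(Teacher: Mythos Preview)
Your reduction to splitting each $D_i$ into two pieces is the right target, but the handling of the ``degenerate'' case is where the argument breaks. The paper's key device, which you do not use, is to first replace each $D_i$ by a \emph{minimal} connected dominating set (pushing excess vertices into $D_k$). Minimality gives for free exactly the fact you are struggling to prove: any bipartition of a non-singleton minimal connected dominating set into two nonempty parts yields two sets \emph{neither} of which is a connected dominating set (otherwise the smaller part would contradict minimality). No spanning-tree leaf choice, no private-neighbor analysis, and no connectedness of the pieces is needed.

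Your alternative route does not close. First, $|D_i|\ge 3$ does not guarantee that a spanning tree $T_i$ has an interior edge (take $|D_i|=3$, $T_i=P_3$). More seriously, even when such an edge exists, cutting along it can leave one side a connected dominating set. Take $G=C_5$, so $d_c(G)=1$ and $D_1=V(C_5)$; every vertex is redundant, so you are in your degenerate case. Any spanning tree of $C_5$ is a $P_5$, and cutting an interior edge gives parts of sizes $2$ and $3$; the size-$3$ part is a path on three consecutive vertices of $C_5$, which \emph{is} a connected dominating set of $C_5$. Your ``private-neighbor counting'' is not specified, and the ``swap'' cannot turn this into two non-CDS pieces. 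The repair is precisely the minimality step: inside $D_1$ pass to a minimal CDS $D_1'$ (here any $P_3$), split $D_1'$ arbitrarily into two nonempty parts, and then absorb $D_1\setminus D_1'$ into one of them if it fails to form a coalition on its own. That is the paper's argument, and it yields the $2k$ parts you want.
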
 

\begin{proof} 
	 Let $\mathcal{D}$ = $\{D_1, D_2,\ldots, D_k \}$ be a connected domatic partition of a
graph $G$, where $k=d_c(G)$. Since $G$ has no full vertex, all sets $D_i$ are not singletons. Now, we can suppose that
$D_1, D_2, \ldots, D_{k-1}$ are minimal connected dominating sets of $G$; if any set
$D_{i}$, for $1\leq i \leq k-1$, is not a minimal connected dominating set, let $D'_i\subseteq D_i$
be a minimal connected dominating set contained in $D_i$ and
add the vertices in $D_i\setminus D'_i$ to $D_k$. Note that any  partition of a non-singleton,
minimal connected dominating set into two nonempty sets creates two  non-connected dominating sets whose union
forms a connected coalition. Therefore, for every $1\leq i \leq k-1$, we
can partition every non-singleton set $D_i$ into two sets $D_{i,1}$, 
and $D_{i,2}$, which form a connected coalition. Doing this for each $D_i$, for $1\leq i \leq k-1$,
we obtain a collection $\mathcal{D'}$ of sets, each of
which is either a connected dominating set consisting of a single vertex of degree $n-1$ or is a non-connected dominating set that forms a coalition with another non-connected dominating set in $\mathcal{D'}$.
Now we can consider the connected dominating set $D_k$. If $D_k$ is a minimal
connected dominating set, then we can partition it into two non-connected dominating sets, add these two sets to $\mathcal{D'}$ and create a connected c-partition of G of order at least $k+1 > d_{c}(G)$. 
 If $D_k$ is not a minimal connected dominating set, let $D'_k\subseteq D_k$ be a minimal connected dominating set contained in $D_k$, partition $D'_k=D'_{k,1}\cup D'_{k,2}$ into two non-empty, non-connected dominating sets, which
together form a connected coalition, and let $D''_k= D_k \setminus D'_k$ and add $D'_{k,1}$ and $D'_{k,2}$ to $\mathcal{D'}$. 
It follows that $D''_k$ is not a connected dominating set, else there are
at least $k + 1$ disjoint connected dominating sets in $G$, a contradiction,
since $k=d_{c}(G)$.
 If $D''_k$ forms a connected coalition with any non-connected dominating set, then adding $D''_k$ to $\mathcal{D'}$, we have a connected c-partition of $G$ of order at least
$k+2 > d_{c}(G)$. However, if $D''_k$ does not
form a connected coalition with any set in $\mathcal{D'}$, then remove $D'_{k,2}$
from $\mathcal{D'}$ and add the set $D'_{k,2}\cup D''_k$, to $\mathcal{D'}$, 
therefore we create a connected c-partition of $G$ of order at least $k+1 > d_{c}(G)$. 
Now, based on  above construction of a connected $c$-partition of $G$, the number of the elements of this partition is minimized when each set $D_i$ ($1\leq i\leq k$) is partitioned into  two sets. Hence, it holds that $CC(G)\geq 2d_c(G)$.
\qed
 \end{proof}

 It is remarkable  that for any graph $G$, $d_c(G)\geq 1$. Based on Lemma \ref{gelem}, we have the following result.
\begin{theorem}\label{ge2}
If $G$ is a connected graph of order $n>1$ with no full vertex, then $CC(G)\ge 2$.
\end{theorem}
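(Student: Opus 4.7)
The plan is essentially to read off the inequality as an immediate corollary of Lemma \ref{gelem}. The point is that the hypothesis of Theorem \ref{ge2} is exactly the hypothesis of Lemma \ref{gelem}, so I just need to combine that lemma with the trivial lower bound $d_c(G)\ge 1$.

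First, I would note that for any connected graph $G$ of order $n>1$, the vertex set $V(G)$ itself is a connected dominating set, so the family $\{V(G)\}$ is a connected domatic partition. Hence $d_c(G)\ge 1$, as the paper already remarks just before the statement.

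Next, I would invoke Lemma \ref{gelem}: since $G$ is a connected graph of order $n>1$ with no full vertex, we have $CC(G)\ge 2d_c(G)$. Combining with $d_c(G)\ge 1$ yields $CC(G)\ge 2$, which is exactly the claim.

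There is no real obstacle here; the theorem is a one-line consequence of the preceding lemma. The only thing worth double-checking is that the hypotheses of Lemma \ref{gelem} (connected, order greater than one, no full vertex) are transferred verbatim, and that invoking $d_c(G)\ge 1$ does not secretly require the existence of a \emph{non-trivial} connected dominating partition (it does not, since a single part suffices). Thus the proof reduces to stating these two facts and multiplying.
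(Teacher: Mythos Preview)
Your proposal is correct and matches the paper's own argument exactly: the paper also states that $d_c(G)\ge 1$ for any graph $G$ and then records Theorem \ref{ge2} as an immediate consequence of Lemma \ref{gelem}. There is nothing to add.
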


By Theorem \ref{ge2}, we immediately conclude the following result.
\begin{corollary}
\label{corlesstwo}
If  $G$ is a connected graph with $CC(G)<2$, then $G$ has  at least one full vertex.
\end{corollary}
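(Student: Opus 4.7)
The plan is to derive this corollary as the contrapositive of Theorem~\ref{ge2}, together with a check of the trivial case $n=1$. The statement of Theorem~\ref{ge2} assumes $n>1$, so the contrapositive only tells us directly that any connected graph of order $n>1$ with $CC(G)<2$ must contain a full vertex. Hence the first thing I would do is separate the argument into two cases according to $|V(G)|$.

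In the case $n>1$, I would simply invoke the contrapositive of Theorem~\ref{ge2}: assuming $G$ is connected with $CC(G)<2$, if $G$ had no full vertex the theorem would give $CC(G)\geq 2$, a contradiction. Therefore $G$ must contain at least one full vertex.

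In the case $n=1$, $G=K_1$ by Lemma~\ref{lemk1} (indeed, $CC(G)<2$ forces $CC(G)\in\{0,1\}$, and since $G$ is connected with $n=1$ we get $CC(G)=1$ and $G=K_1$). The unique vertex $v$ of $K_1$ has degree $0=n-1$, so by the definition of full vertex given in the introduction, $v$ is full. This handles the remaining case.

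Combining both cases yields the claim. The only subtle point, which I would flag explicitly in the write-up, is that the hypothesis $n>1$ of Theorem~\ref{ge2} requires the separate treatment of $K_1$; once that is observed, the argument is immediate and no real obstacle arises.
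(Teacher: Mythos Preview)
Your argument is correct and matches the paper's approach: the paper simply states that the corollary follows immediately from Theorem~\ref{ge2}, which is exactly the contrapositive you invoke. Your explicit treatment of the $n=1$ case is a welcome addition, since Theorem~\ref{ge2} assumes $n>1$ and the paper does not spell out this boundary case.
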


 Our immediate aim in this paper is to investigate the possibility of the existence of a connected c-partition of a graph $G$. For this purpose, we define a family of a graphs, denoted by $\mathcal{F}$, in the following.\newline
 For any two graphs $G$ and $H$,  let $G+H$ be the {\em join} of two graphs G and H which is a graph constructed from disjoint copies of
$G$ and $H$ by connecting each vertex of $G$ to each vertex of $H$. Now, we state the following definition.
\begin{definition}
\label{family}
A family ${\cal F}$ of graphs is constructed as follows:
\begin{itemize}
\item {\bf Step 1.} We add all disconnected graphs $G$ of order $n\geq 2$ into ${\cal F}$. 
\item {\bf Step 2.} For any graph $G\in {\cal F}$, we add $G+K_1$ into ${\cal F}$.
\end{itemize}
\end{definition}

It is remarkable that the family $\cal F$ contains both many disconnected graphs and many connected graphs. For instance, Figure \ref{e1pic} shows a connected graph in $\cal F$. As another example,  consider the the friendship graphs $F_n$ which is a   graph with $2n + 1$ vertices and $3n $ edges, formed by the join of $K_1+nK_{2}$. Based on Definition \ref{family}, we have ${F}_n\in{\cal F}$.

\begin{figure}[ht]
\begin{center}
	\includegraphics[width=0.3\linewidth]{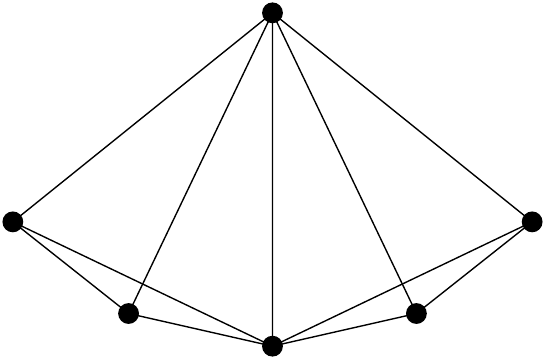}
	\caption{A connected graph in $\cal F$.
}
  \label{e1pic}
	\end{center}
\end{figure}

Now, we prove the following Lemma. 
\begin{lemma}
\label{calF}

For any graph $G$, if $G\in {\cal F}$, then $CC(G)=0$.
\end{lemma}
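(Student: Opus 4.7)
The plan is to prove the statement by induction on the construction of $\mathcal{F}$. The base case (Step 1) is immediate: if $G$ is disconnected of order at least two, then $CC(G)=0$ by Observation \ref{obs1}. For the inductive step (Step 2) I fix $G\in\mathcal{F}$ with $CC(G)=0$ and set $H=G+K_1$, where $v$ denotes the new universal vertex; the goal is to deduce $CC(H)=0$.

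First I would record two easy structural facts about connected dominating sets of $H$: (a) every non-empty $S\subseteq V(H)$ containing $v$ is a connected dominating set of $H$, because $v$ is adjacent to every other vertex and so both dominates $V(H)\setminus S$ and renders $H[S]$ connected; and (b) for a non-empty $S\subseteq V(G)$, the induced subgraphs $H[S]$ and $G[S]$ coincide and $v$ is automatically dominated by $S$, so $S$ is a connected dominating set of $H$ if and only if $S$ is a connected dominating set of $G$.

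Now suppose, toward a contradiction, that $\psi=\{V_1,\ldots,V_k\}$ is a connected coalition partition of $H$, and take $v\in V_1$. If $|V_1|\ge 2$ then by (a) the set $V_1$ is a connected dominating set of $H$ that is not a singleton of a full vertex, violating Definition \ref{2.2}; hence $V_1=\{v\}$, a valid singleton consisting of a full vertex. The remaining parts $\psi':=\{V_2,\ldots,V_k\}$ then partition $V(G)$, and I claim that $\psi'$ is a connected coalition partition of $G$. Indeed, any $V_i=\{u\}$ with $i\ge 2$ that is a singleton connected dominating set of a vertex of degree $|V(H)|-1$ in $H$ forces $u$ to be full in $G$; and any $V_i$ ($i\ge 2$) forming a connected coalition with some $V_j\in\psi$ must have $j\ge 2$, since $V_1=\{v\}$ is itself a connected dominating set and cannot serve as a non-CDS coalition partner. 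Then by (b) the pair $V_i,V_j$ are non-CDSes of $G$ whose union is a connected dominating set of $G$. This furnishes a valid connected coalition partition of $G$ with $k-1\ge 1$ parts (the case $k=1$ being impossible because $V(H)$ alone is neither a singleton CDS of a full vertex nor admits a coalition partner), contradicting the induction hypothesis $CC(G)=0$.

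The argument is essentially mechanical once the equivalence in (b) is in hand; the only step requiring genuine care is ruling out $|V_1|\ge 2$ via fact (a), which is where the definition's insistence that non-singleton parts must be non-CDSes becomes decisive.
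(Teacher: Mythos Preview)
Your proof is correct and follows essentially the same approach as the paper: induct along the construction of $\mathcal{F}$, observe that the universal vertex $v$ must sit in a singleton block of any connected coalition partition of $H=G+K_1$, and then strip off $\{v\}$ to obtain a connected coalition partition of $G$, contradicting the induction hypothesis $CC(G)=0$. The paper frames the induction on the number of full vertices and simply asserts that $\{v\}$ belongs to every $CC$-partition, whereas you justify this explicitly via your fact (a) and also verify via fact (b) that the restricted partition really is a connected coalition partition of $G$; these are welcome details that the paper glosses over, but the underlying argument is the same.
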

\begin{proof}
Using induction on the number of full vertices of $G$, we prove that $CC(G)=0$. For the base step, if $G$ has no full vertex, then since $G\in {\cal F}$, $G$ is a disconnected graph of order $n\geq 2$. By Observation \ref{obs1}, we have $CC(G)=0$. For the induction hypothesis step, suppose that for any graph $H\in {\cal F}$ such that the number of its full vertices is less than the number of full vertices  of $G$, it holds that $CC(H)=0$.  For induction step, suppose that $G=H+K_1$, where $H\in {\cal F}$.  Let $u$ be the vertex of $K_1$. By induction hypothesis, we have $CC(H)=0$. Now, if  $CC(G)\neq 0$,  since $G\neq K_1$, by Lemma \ref{lemk1}, we must have $CC(G)\geq2$. Since $u$ is the full vertex of $G$, the set $\{u\}$ belongs to any $CC(G)$-partition $\psi$. Now, by removing $\{u\}$ of $\psi$, we obtain a $c$-partition for $H$ with $CC(H)\geq 1$, which is a contradiction. Thus, $CC(G)=0$. \qed
\end{proof}

\medskip
The next theorem shows a necessary and sufficient condition for the existence of a connected $c$-partition of a graph $G$.
\begin{theorem}\label{1}
For any graph $G$, $CC(G)=0$ if and only if $G\in {\cal F}$. 
\end{theorem}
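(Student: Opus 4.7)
The forward direction is exactly Lemma \ref{calF}, so my focus is on the converse. My plan is to induct on $n=|V(G)|$. The base cases are trivial: for $n=1$ we have $CC(K_1)=1\neq 0$ by Lemma \ref{lemk1}, and for $n=2$ either $G=2K_1\in\mathcal{F}$ via Step 1 of Definition \ref{family}, or $G=K_2$ admits the partition into its two full vertices and hence $CC(G)\geq 2\neq 0$. In either subcase of $n\leq 2$ the implication holds vacuously or directly.

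For the inductive step, assume $n\geq 3$ and $CC(G)=0$. If $G$ is disconnected, then $G\in\mathcal{F}$ by Step 1 of Definition \ref{family}. Otherwise $G$ is connected, and since $CC(G)<2$, Corollary \ref{corlesstwo} supplies a full vertex $u\in V(G)$. Setting $H:=G-u$, we obtain $G=H+K_1$ with $|V(H)|=n-1\geq 2$. The key claim, whose proof is the substance of the argument, is that $CC(H)=0$; once this is in hand, the inductive hypothesis gives $H\in\mathcal{F}$, and Step 2 of Definition \ref{family} promotes this to $G=H+K_1\in\mathcal{F}$, as required.

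I plan to prove $CC(H)=0$ by contradiction: assume $\psi_H=\{V_1,\dots,V_k\}$ is a connected coalition partition of $H$, and consider the lift $\psi=\psi_H\cup\{\{u\}\}$. Two observations do all the work. First, since no $V_i$ contains $u$, the subgraph of $G$ induced by any union of sets drawn from $\psi_H$ coincides with the one in $H$, so connectivity of each $V_i$ and of each coalition union $V_i\cup V_j$ is preserved. Second, $u$ is dominated by every nonempty vertex set, and edges among vertices of $H$ are the same in $G$ and in $H$; hence a set $S\subseteq V(H)$ is a connected dominating set of $G$ if and only if it is one of $H$. Consequently, every $V_i$ that was a singleton CDS of a vertex of degree $|V(H)|-1$ in $H$ is now a singleton CDS of a full vertex in $G$, every non-CDS $V_i$ remains non-CDS in $G$, every coalition pair $\{V_i,V_j\}$ remains a connected coalition in $G$, and $\{u\}$ is a singleton CDS of a full vertex of $G$. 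Thus $\psi$ meets the requirements of Definition \ref{2.2}, giving $CC(G)\geq k+1\geq 1$, a contradiction.

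The only real obstacle is the bookkeeping inside this lifting argument: one must be careful to verify that the equivalence ``CDS of $H$ $\Leftrightarrow$ CDS of $G$'' really does hold for all subsets of $V(H)$, so that every clause of Definition \ref{2.2} transfers correctly from $\psi_H$ to $\psi$. Once this equivalence is established, the induction closes cleanly and identifies $\mathcal{F}$ as exactly the class of graphs with $CC(G)=0$.
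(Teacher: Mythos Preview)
Your proof is correct and follows essentially the same route as the paper: one direction is Lemma~\ref{calF}, and for the converse you locate a full vertex via Corollary~\ref{corlesstwo}, delete it, argue that the remainder still has connected coalition number zero by lifting a hypothetical partition back to $G$, and then invoke induction together with Step~2 of Definition~\ref{family}. The only cosmetic difference is that you induct on $|V(G)|$ while the paper inducts on the number of full vertices, and your lifting argument is spelled out in more detail than the paper's one-line version; neither changes the substance.
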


\begin{proof}
By Corollary \ref{corlesstwo}, since we assume that $C(G)<2$,   $G$ has at least one full vertex. Now, if $G\in {\cal F}$, by Lemma \ref{calF}, we have $CC(G)=0$, Conversely, suppose that $CC(G)=0$.  To prove $G\in {\cal F}$, we use the induction on the number of full vertices of $G$. For the base step, we assume that $G$ contains exactly one full vertex $u$. Now, consider the graph $G'=G[V\backslash \{u\}]$. If $G'$ is a connected graph, since $G'$ has no full vertex, by Theorem \ref{ge2}, $CC(G')\geq 2$. Hence, using a $CC(G')$-partition and the singleton set $\{u\}$, we can construct a $CC(G)$-partition with $CC(G)\geq 3$, which is a contradiction.  Hence, $G'$ must be disconnected.  Hence, by the definition of $\cal F$, we have $G\in {\cal F}$. 

For induction hypothesis, we assume that if  $G'$ is a connected graph  with~$CC(G')=0$ such that  the number of its full vertices is less than $G$, then $G'\in {\cal F}$. 

Now, we prove the induction step.  Let $u$ be the full vertex of $G$. Consider the graph $G'=G[V\backslash\{u\}]$. Now, we have two cases. 
\begin{itemize}
\item{\bf Case 1.} $G'$ is disconnected. Then, by the definition of $\cal F$, $G\in {\cal F}$.
\item{\bf Case 2.} $G'$ is connected. Since $CC(G)=0$, we must have $CC(G')=0$, because otherwise, similar to the above arguments, using a $CC(G')$ -partition and the singleton set $\{u\}$, we can construct a $CC(G)$-partition with $CC(G)\geq 3$, which is a contradiction. Now, since $G'$ is connected and $CC(G')=0$, by Corollary~\ref{corlesstwo},  $G'$ has at least one full vertex. It is clear that the number of full vertices of $G'$ is less than the number of full vertices of $G$. Then, by induction hypothesis, $G'\in {\cal F}$. Hence, by the definition of $\cal F$, we can see $G\in {\cal F}$. This completes the proof. \qed
\end{itemize}
\end{proof}


By Theorem \ref{1} and  Lemma \ref{lemk1}, we have the following corollaries.

\begin{corollary}\label{cor1}
If $G\not\in\mathcal{F}$ is a connected graph, then $1\leq CC(G) \leq n$.
\end{corollary}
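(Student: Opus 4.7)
The statement is essentially a direct repackaging of what has already been proved, so the plan is very short and contains no real obstacle. The upper bound is completely trivial: a connected coalition partition of $G$ is by definition a partition of $V(G)$ into nonempty blocks, and a set of $n$ elements admits at most $n$ nonempty disjoint blocks, hence any connected $c$-partition has at most $n$ sets, giving $CC(G)\le n$.

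For the lower bound, I would simply invoke Theorem \ref{1}, which established the equivalence $CC(G)=0 \iff G\in\mathcal{F}$. Taking the contrapositive, $G\notin\mathcal{F}$ implies $CC(G)\neq 0$, and since $CC(G)$ is a nonnegative integer, this forces $CC(G)\ge 1$. Combining the two inequalities yields $1\le CC(G)\le n$, which is exactly the claim.

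The only thing to double-check while writing this up is that the hypothesis $G\notin\mathcal{F}$ is actually the right one to quote Theorem \ref{1}, which it is — Theorem \ref{1} is stated for arbitrary graphs $G$, and although Corollary \ref{cor1} additionally assumes $G$ is connected, that extra hypothesis is not needed for the direction we use (it only plays a role in guaranteeing, via Observation \ref{obs1}, that disconnected graphs of order $\ge 2$ already satisfy $CC=0$ and thus lie in $\mathcal{F}$). Since there is no genuine difficulty, the expected length of the write-up is one or two sentences invoking Theorem \ref{1} together with the trivial observation about partition sizes; no case analysis, induction, or construction is required.
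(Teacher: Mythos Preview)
Your proposal is correct and matches the paper's approach exactly: the paper derives Corollary~\ref{cor1} directly from Theorem~\ref{1} (together with Lemma~\ref{lemk1}) without further argument, which is precisely the contrapositive you describe for the lower bound, and the upper bound is indeed trivial.
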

 As before, we know that $K_{1}$ attains the lower bound of Corollary \ref{cor1}, while the 
complete graphs $K_{n}$ and the complete bipartite graphs $K_{r,s}$
for $2\leq r \leq s$ such that $r+s=n$, attain the upper bound.


\begin{corollary}
      If $G\not\in\big\{\mathcal{F},K_{n}\big\}$ is a connected graph of order $n$ with $k$ vertices
of degree $n-1$, then $CC(G)\ge k+2\ge 3$.
\end{corollary}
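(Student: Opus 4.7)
The plan is to strip the $k$ full vertices off of $G$ and apply Theorem \ref{ge2} to what remains. Let $u_1,\ldots,u_k$ denote the full vertices of $G$, and set $G'=G[V\setminus\{u_1,\ldots,u_k\}]$. Since every $u_i$ is adjacent to every other vertex of $G$, deleting them lowers every remaining vertex's degree by exactly $k$, so any vertex of degree $|V(G')|-1$ in $G'$ would already be full in $G$; in particular $G'$ has no full vertex. A short counting check shows $|V(G')|=n-k\ge 2$: if $n-k=1$, then the unique remaining vertex would be adjacent to all $k=n-1$ full vertices and therefore be full itself, forcing $G=K_n$, which is excluded.

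Next I would argue that $G'$ is connected. This is where the hypothesis $G\notin\mathcal{F}$ enters: if $G'$ were disconnected, then by Step~1 of Definition \ref{family} we would have $G'\in\mathcal{F}$, and reinserting $u_1,\ldots,u_k$ one at a time (each acting as a universal vertex on the current graph) is exactly $k$ applications of Step~2 of Definition \ref{family}, which would place $G$ in $\mathcal{F}$, a contradiction. Therefore $G'$ is a connected graph of order at least $2$ with no full vertex, and Theorem \ref{ge2} gives $CC(G')\ge 2$.

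Finally, I would assemble a connected $c$-partition of $G$ by starting from a $CC(G')$-partition $\psi'$ of $G'$ and appending the singletons $\{u_1\},\ldots,\{u_k\}$. Each $\{u_i\}$ is a connected dominating set consisting of a vertex of degree $n-1$, so it satisfies Definition \ref{2.2}. For any $V_j\in\psi'$, fix its coalition partner $V_{j'}\in\psi'$ so that $V_j\cup V_{j'}$ is a connected dominating set of $G'$. Because $G[V_j\cup V_{j'}]=G'[V_j\cup V_{j'}]$, the union is still connected in $G$, and each $u_i$ is dominated by any non-empty subset of $V(G')$, so the union is a connected dominating set of $G$. The same identification shows that a set failing to be a connected dominating set of $G'$ is also not one in $G$, so $V_j$ and $V_{j'}$ remain legitimate partners. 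The resulting partition has $CC(G')+k\ge k+2$ parts, hence $CC(G)\ge k+2$, and since $k\ge 1$ this forces $CC(G)\ge 3$.

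The one step that requires actual care is establishing connectivity of $G'$, because the definition of $\mathcal{F}$ is inductive and must be unwound; everything else is straightforward since universal vertices make domination and connectedness both monotone under insertion.
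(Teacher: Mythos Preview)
Your argument is correct and is precisely the approach the paper has in mind: the corollary is stated without proof, deduced from Theorem~\ref{1} and Lemma~\ref{lemk1}, and the mechanism behind it is exactly the one already used inside the proof of Theorem~\ref{1} (remove a full vertex, obtain a connected $c$-partition of the remainder, and re-insert the full vertex as a singleton class). You simply carry this out for all $k$ full vertices at once, and your verification that partners in $G'$ remain legitimate partners in $G$ is the right thing to check. One tiny omission: in ruling out $|V(G')|<2$ you treat $n-k=1$ but not $n-k=0$; the latter is even easier (all vertices full forces $G=K_n$), so this is harmless.
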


 \section{ Graphs with pendant edges}

In the this section, we  will discuss about  the connected coalition number of graphs with~$\delta(G)=1$. First we have the following results.

\begin{lemma}
\label{lemcc}
For a connected graph $G$, assume that $\psi$ is a $CC(G)$-partition. Let~$x$ be a pendant vertex and $y=\spo(x)$.  Let  $A\in \psi$ with $y\in A$. If any two sets $C,D\in \psi$ form a connected coalition, then $C=A$ or $D=A$. 
\end{lemma}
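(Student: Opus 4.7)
The plan is to reduce the whole statement to the single observation that, because $x$ is pendant with unique neighbor $y$, every connected dominating set of $G$ is forced to contain $y$ (apart from a trivial exception). Once that is established, the conclusion follows at once from the fact that $\psi$ is a partition.

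First, I would prove the structural claim: if $S \subseteq V(G)$ is a connected dominating set of $G$ and $G \neq K_2$, then $y \in S$. To see this, suppose for contradiction that $y \notin S$. Then $x$ must belong to $S$, since $N(x)=\{y\}$ and otherwise $x$ would not be dominated. But now $x$ has no neighbour inside $S$, so $x$ is isolated in the induced subgraph $G[S]$. Since $G[S]$ is required to be connected, this forces $S=\{x\}$, and then $\{x\}$ being dominating gives $V(G)=N[x]=\{x,y\}$, i.e.\ $G=K_2$, contradicting our assumption.

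Next, I would apply the claim to the hypothesis of the lemma. Assume $C$ and $D$ form a connected coalition in $\psi$; by definition $C\cup D$ is a connected dominating set of $G$. If $G\neq K_2$, the structural claim yields $y\in C\cup D$, so $y\in C$ or $y\in D$. Because $\psi$ is a partition and $y\in A$, the block of $\psi$ containing $y$ is uniquely $A$, hence $C=A$ or $D=A$, which is what we want.

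Finally, the degenerate case $G=K_2$ must be checked separately. There both vertices of $G$ are full (they have degree $n-1=1$), so any connected coalition partition of $K_2$ consists of the two singleton blocks $\{x\},\{y\}$, each of which is already a connected dominating set. No pair of blocks can form a connected coalition (a coalition requires both blocks to fail to be connected dominating), so the implication holds vacuously. I do not expect a real obstacle; the only mildly delicate point is the correct handling of the $K_2$ case and a clean argument that a connected dominating set containing the pendant but not its support would have to be the singleton $\{x\}$.
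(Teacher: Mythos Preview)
Your proof is correct and rests on the same key observation as the paper's: since $x$ is pendant with unique neighbour $y$, the connected dominating set $C\cup D$ must contain $y$, and hence one of $C,D$ equals the block $A$ containing $y$.

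If anything, your argument is more careful than the paper's. The paper simply asserts that if $y\notin C\cup D$ then ``$x$ is not dominated by $C\cup D$'', which tacitly assumes $x\notin C\cup D$ as well. You explicitly treat the remaining possibility $x\in C\cup D$, $y\notin C\cup D$, observing that then $x$ is isolated in $G[C\cup D]$ and so connectedness (not domination) fails; since $C$ and $D$ are disjoint nonempty blocks, $|C\cup D|\ge 2$ and this is indeed a contradiction. Your separate handling of $K_2$ is harmless but could be shortened by the same remark: a connected coalition always has $|C\cup D|\ge 2$, so the $S=\{x\}$ exception in your structural claim never arises here.
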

\begin{proof}
Suppose on contrary that $C\neq A$ and $D\neq A$. Since $C$ and $D$ form a connected coalition, then $C\cup D$ is a connected dominating set.  If  $C\cup D$ has no neighbor of $x$, then $x$ is not dominated by $C\cup D$. Hence, $C$ and $D$ do  not form a connected coalition., which is a contradiction. So, $C=A$ or $D=A$.  \qed
\end{proof}
\begin{lemma}
\label{lemsup}
Let $G=(V, E)$ be a  connected graph with no full vertex and  with $\delta(G)=1$ and $CC(G)\geq 3$. Let~$x$ be a pendant vertex and $y=\spo(x)$. Let $\psi$ be a $CC(G)$-partition. If $A\in \psi$ with $y\in A$, then  for any pendant vertex $w\in V$, it holds that $\spo(w)\in A$.  
\end{lemma}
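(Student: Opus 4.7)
The plan is to argue by contradiction, using Lemma \ref{lemcc} twice with two different pendant vertices and leveraging the hypothesis $CC(G)\geq 3$.

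First, I would assume for contradiction that there exists a pendant vertex $w\in V$ with $z=\spo(w)\notin A$. Let $B\in \psi$ be the unique block of the partition containing $z$, so $B\neq A$. The target is to derive that some third block of $\psi$ cannot participate in any connected coalition, contradicting the definition of a connected coalition partition once we recall that $G$ has no full vertex (so no block of $\psi$ can be of the ``singleton full-vertex'' type, and every block must therefore form a connected coalition with some other block).

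Next, since $CC(G)\geq 3$, there exists a block $E\in\psi$ with $E\neq A$ and $E\neq B$. By Definition \ref{2.2}, and because $G$ has no full vertex, $E$ is not a connected dominating set, hence it must form a connected coalition with some other block $F\in \psi$. Now apply Lemma \ref{lemcc} to the pendant vertex $x$ with $y=\spo(x)\in A$: since $E$ and $F$ form a connected coalition, one of them must equal $A$; as $E\neq A$, we conclude $F=A$. Apply Lemma \ref{lemcc} once more, this time to the pendant vertex $w$ with $z=\spo(w)\in B$: the same coalition $\{E,F\}$ forces $E=B$ or $F=B$; as $E\neq B$, we conclude $F=B$. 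Combining these two equalities yields $A=F=B$, which contradicts $A\neq B$. Therefore $\spo(w)\in A$ for every pendant vertex $w$.

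The main subtlety is the justification that the auxiliary block $E$ must engage in a coalition at all; here it is essential to invoke the hypothesis that $G$ has no full vertex, because Definition \ref{2.2} allows a block to stand alone only if it is a singleton full-vertex connected dominating set. Once this is in place, the double application of Lemma \ref{lemcc} is the clean step that traps $F$ into being both $A$ and $B$, and the proof is complete.
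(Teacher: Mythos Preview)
Your proof is correct and follows essentially the same approach as the paper: assume a second support vertex lies in some $B\neq A$, then apply Lemma~\ref{lemcc} once for the pendant $x$ (forcing any coalition to involve $A$) and once for the pendant $w$ (forcing any coalition to involve $B$), and derive a contradiction with $CC(G)\ge 3$. If anything, your version is cleaner: the paper inserts a preliminary argument about whether $x\in A$ that is not actually needed, whereas you go straight to a third block $E$ and trap its partner $F$ into being both $A$ and $B$.
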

\begin{proof}
Let $w$ be an arbitrary pendant vertex of $G$. Let $z=\spo(w)$. If $z\in A$, then we are done. Then, we assume that $z\not \in A$. Let $B\in\psi$ with $z\in B$.  Now, suppose that $x\not\in A$. Hence, there is a set $X\in\psi$ with $x\in X$. By Lemma \ref{lemcc}, $X$ and $A$ form a connected coalition.   Since $z\in B$, based on Lemma \ref{lemcc},  the sets $X$ and $A$ do not form a connected coalition, which is a contradiction Then, we have $x\in A$.  Since $z\in B$, based on Lemma \ref{lemcc}, for any two sets $C,D\in \psi$ forming a connected coalition, we have $C=B$ or $D=B$, also by Lemma \ref{lemcc}, since $y\in A$, $C=A$ or $D=A$. Hence, we easily conclude that $CC(G)=2$, which is a contradiction, since we assumed that $CC(G)\geq 3$. Thus, we must have $z\in A$. \qed
\end{proof}

We recall the definition of corona product of graphs. The corona product of two graphs $H_1$ and $H_2$, denoted by $H_1\circ H_2$, is defined as the graph obtained by taking one copy of $H_1$ and $|V(H_1)|$ copies of $H_2$ and joining the $i$-th vertex of $H_1$ to every vertex in the $i$-th copy of $H_2$.
 In the following, we compute the connected coalition number of connected graphs of the form
 $H\circ K_1$. To aid our discussion, we  state and prove the following theorem.

\begin{theorem}\label{4.6}
If  $G$ is a connected graph of the form $H\circ K_1$, then $CC(G)=2$.
\end{theorem}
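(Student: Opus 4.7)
The plan is to sandwich $CC(G)$ between the lower bound $2$ (supplied by Theorem~\ref{ge2}) and a matching upper bound $2$ (extracted from Lemma~\ref{lemsup}).

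Set $n := |V(H)|$, label $V(H) = \{v_1,\ldots,v_n\}$, and let $\ell_i$ denote the leaf of $G$ attached to $v_i$, so that $V(G) = \{v_1,\ldots,v_n,\ell_1,\ldots,\ell_n\}$ and $\spo(\ell_i)=v_i$ for every $i$. I would first dispose of the trivial case $n=1$: then $G=K_2$, both vertices are full (degree $n-1 = 1$), and the partition into two singletons satisfies Definition~\ref{2.2}, giving $CC(G)=2$ at once.

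For $n\geq 2$, note that the maximum degree of $G$ is at most $n$ (each $v_i$ gains exactly one leaf-neighbor beyond its at most $n-1$ neighbors in $H$, and each $\ell_i$ has degree $1$), whereas a full vertex would need degree $2n-1>n$. So $G$ has no full vertex, and since $\delta(G)=1$ both Theorem~\ref{ge2} and Lemma~\ref{lemsup} apply. Theorem~\ref{ge2} already yields $CC(G)\geq 2$.

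For the matching upper bound I would argue by contradiction. Suppose $\psi$ is a $CC(G)$-partition with $|\psi|\geq 3$. Pick the pendant $x=\ell_1$, so that $y=\spo(x)=v_1$, and let $A\in\psi$ be the set containing $v_1$. By Lemma~\ref{lemsup}, $A$ contains $\spo(w)$ for every pendant $w$ of $G$; since the pendants of $H\circ K_1$ are exactly $\ell_1,\ldots,\ell_n$ with supports $v_1,\ldots,v_n$, this forces $A\supseteq V(H)$. Now $G[V(H)]\cong H$ is connected (since $G$ is connected forces $H$ to be connected), and every leaf $\ell_i$ is dominated by $v_i\in A$ via the pendant edge $v_i\ell_i$; moreover, any additional leaf sitting in $A$ is adjacent to its support vertex in $V(H)\subseteq A$, so $G[A]$ is still connected. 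Hence $A$ is a connected dominating set of $G$ with $|A|\geq n\geq 2$. But $G$ has no full vertex, so Definition~\ref{2.2} forbids any set of $\psi$ to be a connected dominating set. This contradiction gives $CC(G)\leq 2$.

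The conceptual heart, and the only step that is not bookkeeping, is the observation that Lemma~\ref{lemsup} collapses all support-vertex-containing sets into a single block, and that in the very rigid geometry of $H\circ K_1$ the set of support vertices is precisely $V(H)$, which already forms a connected dominating set; this immediately violates the structural constraint on blocks of a connected $c$-partition.
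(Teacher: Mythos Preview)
Your proof is correct and follows essentially the same route as the paper's: invoke Theorem~\ref{ge2} for the lower bound, then assume $CC(G)\geq 3$, apply Lemma~\ref{lemsup} to force all support vertices into a single block $A$, and observe that $A\supseteq V(H)$ is already a connected dominating set, contradicting Definition~\ref{2.2}. Your version is in fact more carefully argued than the paper's own proof: you separately dispose of the case $|V(H)|=1$ (where $G=K_2$ has full vertices, so Theorem~\ref{ge2} and Lemma~\ref{lemsup} do not literally apply), you explicitly verify that $G$ has no full vertex when $n\geq 2$, and you check that $G[A]$ remains connected even when $A$ picks up extra leaves beyond $V(H)$.
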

\begin{proof}
Let $\psi$=\{$V_1$, $V_2$,\ldots, $V_k$\} be a $CC(G)$-partition of $G$. By Theorem  \ref{ge2}, we have $CC(G)\geq 2$. It suffices to prove that $CC(G)\leq 2$. Suppose on
the contrary that $CC(G)\geq 3$. Let~$x$ be a pendant vertex and $y=\spo(x)$. Let $\psi$ be a $CC(G)$-partition. Let $A\in \psi$ with $y\in A$. Then, by Lemma \ref{lemsup}, for any pendant vertex $w\in V$, it holds that $\spo(w)\in A$.  Hence, all vertices $v$ of $G$ with $deg(v)\geq 2$ lie in $A$. Then, $A$ is a connected dominating set of $G$, which is a contradiction. Hence, $CC(G)\leq 2$, and  since $CC(G)\geq 2$, we have $CC(G)=2$.
\qed
\end{proof}

We close this section with the following result.
\begin{theorem}
\label{thm:lessthann}
If $G$ is a connected graph of order $n$ with $\delta(G)=1$ and with no full vertex, then $CC(G)<n$.
\end{theorem}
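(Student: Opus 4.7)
The plan is to argue by contradiction: assume $CC(G)=n$ and derive a structural impossibility at a pendant vertex. If $CC(G)=n$, then the optimal connected coalition partition $\psi$ must consist of $n$ singleton sets $\{v\}$, one for each vertex of $G$. Because $G$ has no full vertex, a singleton $\{v\}$ is never a connected dominating set, so Definition \ref{2.2} forces every singleton in $\psi$ to form a connected coalition with some other singleton in $\psi$.

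Next I would fix a pendant vertex $x$ (which exists because $\delta(G)=1$), let $y=\spo(x)$, and note that since $\psi$ is all singletons, the set $A\in\psi$ with $y\in A$ must be $A=\{y\}$. Applying Lemma \ref{lemcc} to $\psi$, every connected coalition in $\psi$ must involve $A=\{y\}$. In particular, the singleton $\{x\}$ (which is not $A$, since $x\neq y$) must form a connected coalition precisely with $\{y\}$, so $\{x,y\}$ must be a connected dominating set of $G$.

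The contradiction now drops out from the pendancy of $x$: since $N(x)=\{y\}$, the closed neighborhood union is
\[
N[x]\cup N[y]=\{x\}\cup N[y],
\]
and for this to equal $V$ we would need $N[y]=V$, i.e., $y$ to be a full vertex, contradicting the hypothesis. Hence the assumption $CC(G)=n$ fails, giving $CC(G)<n$.

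I do not anticipate a serious obstacle: the argument is a direct two-step application of Lemma \ref{lemcc} together with the observation that a pendant vertex together with its support can only dominate the closed neighborhood of the support. The only points deserving care are that the $CC(G)$-partition is genuinely forced to consist of singletons when $CC(G)=n$, and that the hypotheses (connected, $\delta(G)=1$, no full vertex) exclude the trivial cases $n\leq 3$ where the setup would be vacuous. Everything else is just bookkeeping with the definitions.
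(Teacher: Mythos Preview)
Your proof is correct and follows essentially the same approach as the paper: assume $CC(G)=n$, observe that the partition must consist of singletons, focus on a pendant vertex $x$ and its support $y$, and derive that $y$ would have to be full. The only cosmetic difference is that you invoke Lemma~\ref{lemcc} to force $\{x\}$ to partner with $\{y\}$, whereas the paper argues this directly by a two-case split (either $\{x\}$ partners with $\{y\}$, leading to the same contradiction, or $\{x\}$ partners with some $\{w\}$ with $w\neq y$, which fails because $G[\{x,w\}]$ is disconnected).
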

\begin{proof}
 Let $\psi$ be a $CC(G)$-partition and  $v$ be a pendant vertex of $G$ and $u$ be the support vertex of $v$. Suppose on
the contrary that $CC(G)=n$. So, it must be the case that $\psi$ is a singleton partition such that every set $V_i$, for $1 \leq i\leq n$, contains a single vertex. Then, $\{v\}\in \psi$ and $\{u\}\in \psi$. If $v$ and $u$ form a connected coalition, since $v$ is a single vertex of degree one and the only neighbor of $v$ is the vertex $u$, so $u$ is adjacent to all remaining vertices, it follows that $u$ is a full vertex, a contradiction. Next assume that $v$ and $u$ do not form a connected coalition. Consequently, every singleton set $V_i$, for $1 \leq i\leq n$, must contain a full vertex, and it is a contradiction. Hence, $CC(G)<n$.\qed

\end{proof}

\section{Trees}  
 In this section, we determine the connected coalition  trees. First we have the following theorem.
 \begin{theorem}
\label{thmcct2}
For any tree $T$ of order $n$ with no full vertex, we have $CC(T)= 2$.
\end{theorem}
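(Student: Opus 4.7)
The plan is to prove $CC(T)=2$ via matching bounds. The lower bound $CC(T)\ge 2$ is immediate from Theorem \ref{ge2}, since a tree with no full vertex is necessarily connected and of order $n\ge 4$ (smaller trees are $K_1$, $K_2$, or $P_3$, all of which do have a full vertex). All the work is in the upper bound $CC(T)\le 2$, which I would prove by contradiction: assume a $CC(T)$-partition $\psi=\{V_1,\ldots,V_k\}$ with $k\ge 3$ exists, and derive an impossibility.

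The pivotal tree-theoretic fact I would isolate first is: in any tree $T$ of order $n\ge 3$, every connected dominating set contains the set $I(T)$ of internal (non-leaf) vertices, and moreover $I(T)$ itself is a connected dominating set of $T$. The first part holds because every internal vertex $v$ is a cut vertex of $T$; if $v$ were omitted from a CDS $D$, then $D$ would need to meet each of the $\ge 2$ components of $T-v$ in order to dominate them, yet no two such vertices of $D$ in different components could be joined inside $D$ without passing through $v$. The second part holds because removing only leaves from a tree leaves a connected subtree, so $I(T)$ is connected, and every leaf is dominated by its support vertex, which lies in $I(T)$.

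Now pick any pendant $x$ of $T$ with support $y=\spo(x)$, and let $A\in\psi$ be the set containing $y$. Since $T$ has no full vertex, every set in $\psi$ is a non-CDS that must form a connected coalition with another non-CDS, and by Lemma \ref{lemcc} every such coalition must involve $A$. Therefore, for each $V_i\in\psi\setminus\{A\}$, the union $A\cup V_i$ is a CDS of $T$, and hence by the structural fact $V_i\supseteq I(T)\setminus A$. Since $k-1\ge 2$ and the sets $V_i$ are pairwise disjoint, this forces $I(T)\setminus A=\emptyset$, i.e.\ $A\supseteq I(T)$. But then $A$ is itself a connected dominating set of $T$, contradicting the role of $A$ in $\psi$ as a non-CDS forming a coalition.

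The main obstacle is the tree-structural lemma that any CDS of a tree contains all internal vertices (and that $I(T)$ is a CDS); once that is in place, the rest is forced by combining Lemma \ref{lemcc} with disjointness of the $V_i$. The strategy mirrors the proof of Theorem \ref{4.6}, but where that proof exploited the very restrictive structure of $H\circ K_1$ (every non-leaf is a support vertex, so Lemma \ref{lemsup} alone trapped everything into $A$), here the general tree fact about $I(T)$ plays the analogous role, and Lemma \ref{lemsup} is not actually needed.
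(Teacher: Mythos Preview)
Your proof is correct and takes a genuinely different route from the paper's. The paper argues by cases on whether $T[A]$ is connected: if it is, an undominated vertex $u$ acquires neighbours in two other classes, and splicing paths inside $V_1\cup V_2$, $V_1$, and $V_1\cup V_3$ yields a closed walk that forces a cycle in $T$; if $T[A]$ is disconnected, two vertices of $A$ are joined by two distinct paths (one through $V_2$, one through $V_3$), again giving a cycle. You instead isolate the structural lemma that every connected dominating set of a tree contains $I(T)$ (and that $I(T)$ itself is a CDS), and then let Lemma~\ref{lemcc} plus disjointness of the $V_i$ force $I(T)\setminus A=\emptyset$, whence $A$ is a CDS---contradiction. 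Your approach is shorter and more conceptual: it pins down the unique minimal CDS of a tree and lets pigeonhole finish the job, whereas the paper's argument is a direct cycle construction that does not require isolating $I(T)$. One point worth making explicit in your write-up is the easy general fact that any superset of a CDS is again a CDS (domination is monotone, and every new vertex has a neighbour in the old CDS, so connectedness is preserved); this is what justifies the final step from $A\supseteq I(T)$ to ``$A$ is a CDS''.
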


\begin{proof}
By Theorem  \ref{ge2}, we have $CC(T)\geq 2$. It suffices to prove that $CC(T)\leq 2$. Suppose on  the contrary that $CC(T)\geq 3$. Now we may assume that  $a$ and $b$ are two vertices of $T$  such that $a$ is a leaf and $b$ is a support vertex of $a$. Let $\psi$ be a $CC(T)$-partition, and  suppose that $V_1\in \psi$ with $b\in V_1$. Since $CC(T)\geq 3$, without loss of generality, assume that $V_2, V_3\in \psi$ are two distinct sets such that $V_2\neq V_1$ and $V_3\neq V_1$.  By Lemma \ref{lemcc}, each of $V_2$ and $V_3$ form a connected coalition with $V_1$, however, $V_2$ and $V_3$ do not form a connected coalition. Now, we consider the following cases.
\begin{itemize}

\item {\bf $T[V_1]$ is connected.} By Definition \ref{2.2}, $V_1$ is not a dominating set. Then,  there exists a vertex $u\not\in V_1$ with no neighbor in $V_1$. Hence, if any set $A\in\psi $ is in connected coalition with  $V_1$, then $A\cap N[u]\neq \emptyset.$ Assume w.l.o.g.  that $u\in V_3$. Let $u_1\in N(u)$ and assume w.l.o.g. $u_1\in V_2$. Since $T[V_1\cup V_2]$ is connected, there is a path  $P_{u_1x}$ between $u_1$ and $x$ for some vertex  $x\in V_1$. Note that all vertices on $P_{u_1x}$ are inside $V_1\cup V_2$. Also, since  $T[V_1\cup V_3]$ is connected,  there is a path  $Q_{yu}$ between $y$ and $u$ for some vertex $y\in V_1$. Note that all vertices on $Q_{yu}$ are inside $V_1\cup V_3$ (see Figure \ref{case1}).  Since $T[V_1]$ is connected, there is a path $R_{xy}$ between $x$ and $y$ inside $V_1$. Since $u_1\in N(u)$,  there is a cycle $uu_1P_{u_1x} R_{xy}Q_{yu}$ in $T$, which is a contradiction. 
\begin{figure}[ht]
\begin{center}
	\includegraphics[width=0.6\linewidth]{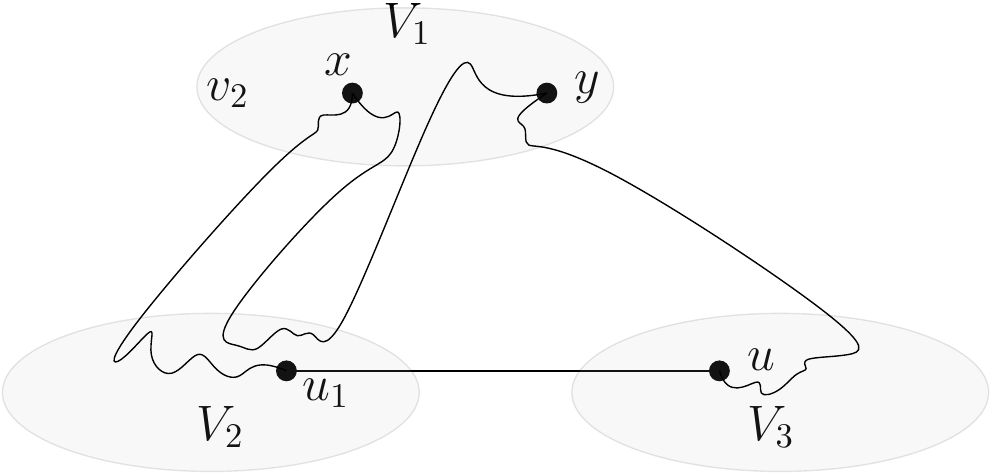}
	\caption{The case that $T[V_1]$ is connected.}
  \label{case1}
	\end{center}
\end{figure}

\item  {\bf $T[V_1]$ is not connected.} Assume that $x,y\in V_1$ such that there is no path between them in $G[V_1]$.  Since $G[V_1\cup V_2]$ is connected, there is a path $P_{x,y}$ between $x$ and $y$ that lies in $G[V_1\cup V_2]$. Also, since $G[V_1\cup V_3]$ is connected, there is a path $Q_{x,y}$ between $x$ and $y$ that lies in $G[V_1\cup V_3]$. Hence, it is clear that there are two paths between $x$ and $y$ in $T$, which is a contradiction.\qed
\end{itemize}
 \end{proof}
 As an immediate consequence of Theorem \ref{thmcct2}, we have the following result for the paths.

\begin{corollary}\label{path}

For any path $P_n$ of order $n$, where $n\ne 3$, we have $CC(P_{n})=2$.

\end{corollary}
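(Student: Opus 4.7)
The plan is to reduce the corollary to Theorem \ref{thmcct2} by checking that the no-full-vertex hypothesis holds for paths of the relevant orders, and to handle the small remaining orders by direct inspection.

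First I would observe that $\Delta(P_n) \leq 2$, with equality precisely when $n \geq 3$. A full vertex of $P_n$ would have degree $n-1$, so for $n \geq 4$ one has $n - 1 \geq 3 > \Delta(P_n)$, which means $P_n$ contains no full vertex. Since $P_n$ is a tree, Theorem \ref{thmcct2} applies directly and yields $CC(P_n) = 2$. This is the bulk of the cases and involves no further work.

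Next I would deal with the remaining small order $n = 2$ by hand (the degenerate case $n = 1$ is $K_1$, already covered by Lemma \ref{lemk1}, and the excluded case $n = 3$ is excluded because the middle vertex is full, making $P_3 = 2K_1 + K_1 \in \mathcal{F}$ so that $CC(P_3) = 0$ by Theorem \ref{1}). For $n = 2$ we have $P_2 = K_2$, whose two vertices are both full. The partition into two singletons $\{v_1\}, \{v_2\}$ is a valid connected coalition partition, since each singleton is itself a connected dominating set consisting of a vertex of degree $n-1$; this gives $CC(P_2) \geq 2$, and obviously $CC(P_2) \leq n = 2$.

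There is no real obstacle: Theorem \ref{thmcct2} does the entire job once the hypothesis is verified, and the small case $n=2$ is immediate. The only mildly subtle point worth stating explicitly is why $n = 3$ must be excluded, which is exactly the content of the full-vertex / family $\mathcal{F}$ analysis established in Theorem \ref{1}.
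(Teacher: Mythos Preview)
Your argument is correct and matches the paper's, which states the corollary simply as an immediate consequence of Theorem \ref{thmcct2} without further proof; you have supplied the routine verification that $P_n$ has no full vertex for $n \geq 4$ and handled $n = 2$ directly. One small slip: you cite Lemma \ref{lemk1} for $n = 1$, but that lemma gives $CC(K_1) = 1$, not $2$, so the corollary as literally stated fails at $n = 1$ --- the paper evidently intends $n \geq 2$.
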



\section{Graphs $G$ with $CC(G)=n$ and $CC(G)=n-1$}
For a given graph $G$, computing $CC(G)$ seems to be an NP-hard problem,  and therefore, computing $CC(G)$ for a class of graphs  in polynomial time seems to be interesting. In this section, we present two polynomial-time algorithms that  for a  given connected graph $G$ of order $n$ determine whether it holds   $CC(G)=n$ or $CC(G)=n-1$. For the sake of simplicity, we assume that $G$ has no full vertex.
\subsection{Graphs with $CC(G)=n$}
Let $e_{pq}$ be an edge of $G$ with two end vertices $p$ and $q$. A vertex $x\in V $ is called {\em edge-dominated} by the edge $e_{pq}$, if $x$ is adjacent to $p$ or $q$. Now, we define  the {\em edge-domination matrix} ${\cal E}_{m\times n}$ with $m$ rows and $n$ columns on the graph $G$, where $m$ is number of the edges of $G$. The definition is as follows.
$${\cal E}(e_{pq}, x)=\left\{
\begin{array}{cc}
1&\text{if the vertex~} x \text{~is edge-dominated by the edge~} e_{pq},\\
0&\text{otherwise.}
\end{array}
\right.$$
For example, the matrix  ${\cal E}$ depicted in Figure \ref{mat} is the edge-dominated matrix of the graph $C_6$ depicted in Figure \ref{e2pic}.
\begin{figure}
\begin{minipage}[b]{.55\textwidth}
\centering
\label{M}
\(
 {\cal E}=\begin{pmatrix}
1 & 1 & 1 & 0 & 0 & 1 \\
1 & 1 & 1 & 1 & 0 & 0 \\
0 & 1 & 1 & 1 & 1 & 0 \\
0 & 0 & 1 & 1 & 1 & 1 \\
1 & 0 & 0 & 1 & 1 & 1 \\
1 & 1 & 0 & 0 & 1 & 1 
\end{pmatrix}
\)
\caption{The edge-dominated matrix ${\cal E}$ for~$C_6$.}\label{mat}
\end{minipage}
\hspace{2cm}
\centering
\begin{minipage}[b]{.25\textwidth}
\includegraphics[width=\textwidth]{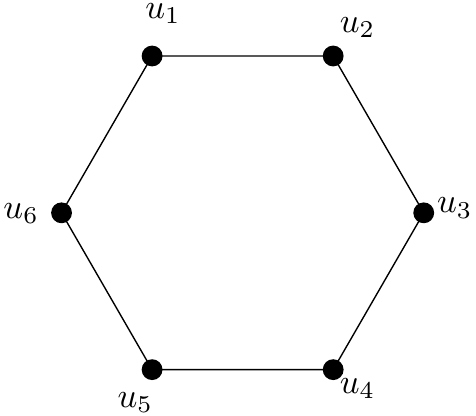}
\caption{$C_6$.}\label{e2pic}
\end{minipage}
\end{figure}

A useful matrix in Graph Theory, is the {\em incidence matrix}. For the graph $G$, the incidence  matrix $\ve_{n\times m}$ with $m$ rows and $n$ columns is defined as follows.
$${\ve}(x,e)=\left\{
\begin{array}{cc}
1&\text{if the vertex~} x \text{~is incident to the edge~} e,\\
0&\text{otherwise.}
\end{array}
\right.$$
Now, we prove the following theorem. 
\begin{theorem}
\label{thmn}
For any connected graph $G$ of order $n$ and with no full vertex, $CC(G)=n$ if and only if for any vertex $x\in V$, there is an edge $e$ with $\ve(x,e)=1$ such that
$$\sum_{v\in V}{\cal E}(e,v)=n.$$
\end{theorem}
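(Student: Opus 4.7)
My plan is to show that the matrix condition is exactly a restatement of the following graph-theoretic property: for every vertex $x\in V$ there is a neighbor $y$ of $x$ such that $\{x,y\}$ is a connected dominating set of $G$. The key observation is the equivalence
\[
\sum_{v\in V}{\cal E}(e_{pq},v)=n \quad\Longleftrightarrow\quad \{p,q\}\text{ is a dominating set of }G,
\]
because ${\cal E}(e_{pq},v)=1$ means exactly that $v$ is adjacent to $p$ or to $q$. Since $pq\in E$ guarantees that $G[\{p,q\}]$ is connected, the sum equals $n$ precisely when $\{p,q\}$ is a connected dominating set. Likewise $\ve(x,e)=1$ just says that $x$ is an endpoint of $e$.

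For the forward direction, I would assume $CC(G)=n$. Then the $CC(G)$-partition $\psi$ has $n$ parts, so every part is a singleton $\{v\}$. Because $G$ has no full vertex, no singleton is itself a connected dominating set, and so, by Definition \ref{2.2}, each $\{x\}\in\psi$ must form a connected coalition with some other singleton $\{y\}\in\psi$; that is, $\{x,y\}$ is a connected dominating set of $G$. Setting $e=xy$, this gives $\ve(x,e)=1$ and, via the equivalence above, $\sum_{v\in V}{\cal E}(e,v)=n$, which is exactly the required condition at $x$.

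For the reverse direction, I would explicitly exhibit the singleton partition $\psi=\{\{v\}:v\in V\}$ as a connected coalition partition. For each $x\in V$, the hypothesis supplies an edge $e$ incident to $x$ with $\sum_{v\in V}{\cal E}(e,v)=n$; letting $y$ be the other endpoint of $e$, the equivalence tells us that $\{x,y\}$ is a connected dominating set, so $\{x\}$ and $\{y\}$ form a connected coalition inside $\psi$. Since $G$ has no full vertex, no individual $\{v\}$ is a connected dominating set, so every part of $\psi$ satisfies the condition of Definition \ref{2.2}. Hence $CC(G)\ge n$, and combined with the trivial bound $CC(G)\le n$ this yields $CC(G)=n$.

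The only place that needs a moment of care is the equivalence above: one must observe that the endpoints $p$ and $q$ themselves contribute to $\sum_{v}{\cal E}(e_{pq},v)$, which is correct because $p$ and $q$ are adjacent (via the very edge $pq$). Once this is verified, the entire argument is essentially a bookkeeping translation between the statement ``$CC(G)=n$'' (which forces the all-singleton partition) and the existence, at every vertex $x$, of an incident ``witness edge'' whose two endpoints jointly form a connected dominating set of $G$.
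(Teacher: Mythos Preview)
Your proof is correct and follows essentially the same approach as the paper: both translate the matrix condition into ``every vertex $x$ lies on an edge whose two endpoints form a connected dominating set,'' then argue that $CC(G)=n$ forces the all-singleton partition and hence such a witness edge at each $x$, with the converse obtained by exhibiting the singleton partition. In fact your write-up is more complete, since the paper dismisses the converse as ``straightforward'' while you spell it out and also explicitly verify that the endpoints $p,q$ themselves are edge-dominated by $e_{pq}$.
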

\begin{proof}
Let $V=\{v_1,\ldots, v_n\}$ be the vertices of $G$. Suppose that $CC(G)=n$. Then, there is a $CC(G)$-partition $\psi=\left\{\{v_1\}, \ldots, \{v_n\}\right\}$ such that  any $\{v_i\}$ forms a  connected coalition with some set $\{v_j\}$ with $j\neq i$. Let $x\in V$ be an arbitrary vertex. Since $\{x\}\in\psi$, there is a vertex $u\in V$ such that $\{u\}\in \psi$ forms a connected coalition with $\{x\}$.  By the definition, $\{x,u\}$ is a connected dominating set. Then,  $e=(x,u)$ is an edge of $G$, and all vertices of $G$ is dominated by $\{x,u\}$. Therefore,  $\ve(x,e)=1$ and  ${\cal E}(e,v)=1$ for any vertex $v\in V$. Hence, $\sum_{v\in V}{\cal E}(e,v)=n$. The proof of the converse, is straightforward.\qed
\end{proof}

Now, we will describe the algorithm. The algorithm first computes the matrices $\cal E$ and $\ve$ for the graph $G$.  Then,  for all vertices  $x\in V$ the following operations is applied. For all edges $e$, the algorithm checks whether $\ve(x,e)=1$.  If  $\ve(x,e)=1$ and $\sum_{v\in V}{\cal E}(e,v)=n$, then we consider  $f=1$, and the algorithm checks another vertex of $V$.  In the algorithm, we used two variables $f$ and $flag$ to determine which vertices satisfy the conditions of Theorem \ref{thmn}. For more details, see Algorithm \ref{alg1}.
\begin{algorithm}[htb]
\scriptsize
\label{alg1}	
\SetKwInOut{Input}{input}
\Input{A connected graph $G$ with no full vertex, and with vertex set $V$ and the edge set $E$.}
 \SetKwInOut{Output}{output} 
Computes the martices $\cal E$ and $\ve$\;
$f=0$\;
$s:=0$\;
\ForEach {$x\in V$}
{
\ForEach{$e\in E$}
{
\If{ $\ve(x,e)==1$}
{
\ForEach{$v\in V$}
{
$s=s+{\cal E}(e,v)$\;
}
\If{$s==n$}
{
$f=1$\;
break\;
}
}
}
\If{$f=0$}
{
$flag:=0$\;
break;
}
\Else
{
$flag:=1$\;
$f=0$\;
}
}
\If{$flag=1$}
{
\Return {\bf yes}\;
}
\Else
{
\Return {\bf no}\;
}
\caption{{\sc CheckCCG$n$}($G, V, E$)}
\end{algorithm}

Now, we compute the time complexity of algorithm {\sc CheckCCG$n$}($G, V, E$).  It is clear that the computations of the matrices  $\cal E$ and $\ve$ take $O(mn)$ times. Then, since we have three {\bf foreach} loops, then according to the algorithm, the overall running time of  three loops is $O(n^2m)$. Hence, the overall running time of the algorithm is $O(n^2m)+O(nm)=O(n^2m)$. Since $m\in O(n^2)$, then the time complexity of the algorithm is $O(n^4)$.
Hence, we have the following theorem.
\begin{theorem}
The worst-case time complexity of algorithm {\sc CheckCCG$n$}($G, V, E$) is $O(n^4)$.
\end{theorem}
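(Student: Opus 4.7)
The plan is to bound the running time by accounting separately for the preprocessing (matrix construction) phase and the main nested-loop phase of {\sc CheckCCG$n$}, then to use the trivial bound $m = O(n^2)$ at the end.

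First I would analyze the preprocessing. The incidence matrix $\ve$ has $nm$ entries, and each entry $\ve(x,e)$ is determined in constant time (a vertex is incident to an edge iff it is one of its two endpoints), so $\ve$ can be filled in $O(nm)$ time. The edge-domination matrix $\mathcal{E}$ also has $nm$ entries; the value ${\cal E}(e_{pq},x)$ only checks whether $x \in N[p]\cup N[q]$, which is a constant-time adjacency test once the adjacency structure of $G$ is available. Hence the whole preprocessing step costs $O(nm)$.

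Next I would bound the three nested \textbf{foreach} loops. The outermost loop iterates at most $n$ times (once per vertex $x$), the middle loop iterates at most $m$ times (once per edge $e$), and inside it the summation $s = s + {\cal E}(e,v)$ ranges over $v \in V$ and therefore costs $O(n)$. All the bookkeeping with the flag variables $f$ and $flag$ and the {\bf if}/{\bf break} tests are constant per iteration, and the {\bf break} statements can only shorten the work in practice. Thus a clean worst-case upper bound on the cost of the loops is $O(n \cdot m \cdot n) = O(n^2 m)$.

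Adding the two contributions gives $O(nm) + O(n^2 m) = O(n^2 m)$. Finally, since $G$ is a simple graph on $n$ vertices we have $m \leq \binom{n}{2}$, i.e.\ $m = O(n^2)$, so the overall worst-case running time is $O(n^2 \cdot n^2) = O(n^4)$, which is the claimed bound. There is no real obstacle in this proof; the only point that deserves explicit mention is that filling $\mathcal{E}$ can be done in $O(nm)$ rather than a naive $O(n^2 m)$, since each entry reduces to a single adjacency query.
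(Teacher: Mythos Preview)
Your argument is correct and follows essentially the same approach as the paper: bound the matrix construction by $O(nm)$, bound the three nested loops by $O(n\cdot m\cdot n)=O(n^2m)$, add to get $O(n^2m)$, and then substitute $m=O(n^2)$ to obtain $O(n^4)$. The only difference is that you spell out why each matrix entry can be filled in constant time, which the paper leaves implicit.
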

\subsection{Graphs with $CC(G)=n-1$}
Let $p=(a,b,c)$ be a triple of vertices $a$, $b$ and $c$. A vertex $x\in V $ is called {\em three-vertex-dominated} by  $p$, if $x$ is dominated by $\{a,b,c\}$. Now, we define  {three-vertex-dominated} matrix $\cal H$ as follows.
$${\cal H}\left(\{a,b,c\}, x\right)=\left\{
\begin{array}{cc}
1&\text{if the vertex~} x \text{~is  dominated by~}\{a,b,c\} ,\\
0&\text{otherwise.}
\end{array}
\right.$$
Now, we prove the following theorem.
\begin{theorem}
\label{thmfinal}
For any connected graph  $G$ of order $n$ and with no full vertex, $CC(G)=n-1$ if and only if there are two vertices $u,v\in V$ such that for any vertex $x\in V\backslash\{u,v\}$, 
\begin{enumerate}
\item there is an edge $e=(p,q)$ with $p,q\not\in\{u,v\}$  and $\ve(x,e)=1$ such that
$\sum_{v\in V}{\cal E}(e,v)=n, or$
\item $G[x,u,v]$ is connected and $\sum_{w\in V}{\cal H}(\{x,u,v\},w)=n,$
\end{enumerate}
and 
there is a vertex $y\in V\backslash\{u,v\}$ such that  $G[y,u,v]$ is connected and $$\sum_{w\in V}{\cal H}(\{y,u,v\},w)=n.$$
\end{theorem}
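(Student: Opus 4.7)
The plan is to exploit the rigid structure that $CC(G)=n-1$ forces on any $CC(G)$-partition: since $n-1$ blocks must cover $n$ vertices, exactly one block has size two (call its elements $u,v$) and the remaining $n-2$ blocks are singletons exhausting $V\setminus\{u,v\}$. Both directions of the equivalence then amount to translating Definition \ref{2.2} into the matrix language of $\ve$, ${\cal E}$, and ${\cal H}$ introduced in this section.

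For the forward direction, I would fix a $CC(G)$-partition $\psi$ realising $CC(G)=n-1$, isolate the unique two-element block $\{u,v\}$, and examine the coalition partners required by Definition \ref{2.2}. Because $G$ has no full vertex, every block must pair with some other non-CDS block. The pair $\{u,v\}$ must therefore partner with some singleton $\{y\}$, and $\{y,u,v\}$ being a CDS translates directly into the ${\cal H}$-condition in the final clause of the theorem. For each other singleton $\{x\}$ with $x\in V\setminus\{u,v\}$, its partner in $\psi$ is either another singleton $\{w\}$ (in which case $\{x,w\}$ is a CDS, so $e=xw$ is an edge with both endpoints outside $\{u,v\}$, $\ve(x,e)=1$, and $\sum_{v\in V}{\cal E}(e,v)=n$, yielding condition (1)) or the block $\{u,v\}$ itself (in which case $\{x,u,v\}$ is a CDS and condition (2) holds).

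For the reverse direction, I would define $\psi=\{\{u,v\}\}\cup\{\{x\}:x\in V\setminus\{u,v\}\}$ and verify that $\psi$ is a connected coalition partition of size $n-1$: the extra clause of the hypothesis supplies $\{y\}$ as a coalition partner for $\{u,v\}$, while conditions (1) and (2) supply, for each other singleton $\{x\}$, a partner in $\psi$ (either the singleton $\{q\}$ formed by the non-$x$ endpoint of the witnessing edge, or $\{u,v\}$ itself). This yields $CC(G)\ge n-1$; combined with the contrapositive of Theorem \ref{thmn} (which, in the algorithmic context of this section, will already have been invoked to certify $CC(G)\ne n$), we conclude $CC(G)=n-1$.

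The main obstacle I anticipate is cleanly separating the $CC(G)=n$ regime from the $CC(G)=n-1$ regime: the reverse construction strictly yields only $CC(G)\ge n-1$, so obtaining equality requires ruling out $CC(G)=n$, which is the role Theorem \ref{thmn} plays in the surrounding subsection. A secondary but routine point is verifying in the forward direction that the edge witnessing condition (1) really has both endpoints outside $\{u,v\}$; this follows because the coalition partner must be a distinct block of $\psi$ and the only non-singleton block is $\{u,v\}$, so any singleton partner must correspond to a vertex of $V\setminus\{u,v\}$.
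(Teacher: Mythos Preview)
Your approach matches the paper's: the forward direction is argued exactly as you describe (one doubleton block $\{u,v\}$, case split on the coalition partner of each singleton, and the partner of $\{u,v\}$ supplying the vertex $y$), and for the converse the paper writes only ``The proof of the converse, is straightforward.'' Your observation that the reverse construction a priori yields only $CC(G)\ge n-1$, so that equality requires separately excluding $CC(G)=n$ via Theorem~\ref{thmn}, is a genuine subtlety that the paper does not make explicit.
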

\begin{proof}
Let $V=\{v_1,\ldots, v_n\}$ be the vertices of $G$. Suppose that $CC(G)=n-1$. Then, there is a $CC(G)$-partition $\psi=\left\{\{w_1\}, \ldots, \{w_{n-2}\}, \{u,v\}\right\}$. Let $C\in\psi$ be an arbitrary set. Suppose that $C=\{x\}$ is singleton. If $C$ forms a connected coalition with a set $\{a\}\in \psi$, then by the definition, $\{x,a\}$ is a connected dominating set. Then,  $e=(x,a)$ is an edge of $G$, and all vertices of $G$ is dominated by $\{x,a\}$. Therefore,  $\ve(x,e)=1$ and  ${\cal E}(e,v)=1$ for any vertex $v\in V$. Hence, $\sum_{v\in V}{\cal E}(e,v)=n$. Now, if $C$ forms a connected coalition with the set $\{u,v\}\in \psi$, then by the definition, $\{x,u,v\}$ is a connected dominating set. Therefore, $G[x,u,v]$ is connected, and $\sum_{w\in V}{\cal H}(\{x,u,v\},w)=n.$ 

Now, suppose that $C=\{u,v\}$. Then, there is a vertex $\{y\}\in\psi$ that forms a connected coalition with $C$. Therefore, by the definition, $\{y,u,v\}$ is a connected dominating set. Then, $G[y,u,v]$ is connected, and $\sum_{w\in V}{\cal H}(\{y,u,v\},w)=n.$  The proof of the converse, is straightforward.\qed
\end{proof}

Now, our second algorithm depicted in Algorithm \ref{alg2}. The algorithm based on Theorem \ref{thmfinal}.
\begin{algorithm}[htb]
\scriptsize
\label{alg2}	
\SetKwInOut{Input}{input}
\Input{A connected graph $G$ with no full vertex, and with vertex set $V$ and the edge set $E$.}
 \SetKwInOut{Output}{output} 
Computes the martices $\cal H$, $\cal E$,  and $\ve$\;
$f=0$\;
$s:=0$\;
\ForEach {$u\in V$}
{
\ForEach{$v\in V$ with $u\neq v$}
{
\ForEach{$x\in V\backslash\{u,v\}$}
{
\ForEach{$e\in E$ with $\ve(x,e)=1$}
{
\If{$G[\{x,u,v\}]$ is connected and $\sum_{w\in V}{{\cal H}(\{x,u,v\},w)}=n$, or $\sum_{w\in V}{{\cal E}(e,w)}=n$  }
{
$f=1$\;
break\;
}
}
\If{$f=0$}
{
$flag:=0$\;
break\;
}
\Else
{
$flag=1$\;
$f=0$\;
}
}
\If{$flag=1$}
{
\Return yes\;
}
}
\If{$flag=1$}
{
\Return yes\;
}
}
\If{$flag=1$}
{
\Return yes\;
}
\Else
{
\Return no\;
}
\caption{{\sc CheckCCG2}($G, V, E$)}
\end{algorithm}

It is not hard to see that algorithm {\sc CheckCCG2}($G, V, E$) has  four {\bf foreach} loops and two summations. Then,  the overall running time of the algorithm is $O(n^6)$. Now, we have the following result.
\begin{theorem}
The worst-case time complexity of algorithm {\sc CheckCCG2}($G, V, E$) is $O(n^6)$.
\end{theorem}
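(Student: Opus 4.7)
The plan is to perform a direct accounting of the cost of the preprocessing stage plus the cost of the nested loops, and then simplify using $m = O(n^2)$. Since the statement is an upper bound on an explicit algorithm, the argument is essentially a careful bookkeeping proof rather than a clever construction.

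First, I would handle the preprocessing line that computes the three matrices. The incidence matrix $\ve$ has $n$ rows and $m$ columns and can be filled in $O(nm)$ operations by scanning each edge and marking its two endpoints. The edge-domination matrix $\cal E$ has $m$ rows and $n$ columns and can be filled by iterating over each edge $e_{pq}$ and, for each vertex $x \in V$, setting ${\cal E}(e_{pq},x) = 1$ iff $x \in N[p] \cup N[q]$; this is also $O(nm)$ with an adjacency matrix in hand. The three-vertex-dominated matrix $\cal H$ has $\Theta(n^3)$ rows (one per unordered triple) and $n$ columns, so filling it costs $O(n^4)$. Thus preprocessing is $O(n^4 + nm)$, which is $O(n^4)$ since $m = O(n^2)$.

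Next, I would turn to the four nested \texttt{foreach} loops: the outer loop over $u\in V$ contributes a factor $n$, the second loop over $v\neq u$ a factor $n-1$, the third loop over $x\in V\setminus\{u,v\}$ a factor $n-2$, and the innermost loop over edges $e$ incident to $x$ (those with $\ve(x,e)=1$) a factor at most $m$. Inside the innermost iteration the algorithm performs (i) a connectivity test on the three-vertex induced subgraph $G[\{x,u,v\}]$, which is $O(1)$ given the adjacency matrix, (ii) the summation $\sum_{w\in V} {\cal H}(\{x,u,v\},w)$, which is $O(n)$, and (iii) the summation $\sum_{w\in V} {\cal E}(e,w)$, which is $O(n)$. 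The constant-work bookkeeping with the flags $f$ and $flag$ adds nothing asymptotically. Hence each innermost iteration costs $O(n)$, and the total loop cost is
\[
O(n \cdot n \cdot n \cdot m \cdot n) \;=\; O(n^4 m).
\]

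Finally, I would combine the two contributions and simplify using $m \leq \binom{n}{2} = O(n^2)$ to conclude that the total running time is $O(n^4 + n^4 m) = O(n^4 m) = O(n^6)$, which is the claimed bound. The only mild obstacle I foresee is being precise about the precomputation cost of $\cal H$, since naively one might worry this could blow up the estimate; noting that $|{\rm triples}| = \Theta(n^3)$ and each row takes $O(n)$ work to fill keeps it safely inside $O(n^4)$, absorbed by the main loop cost. Everything else is routine multiplication of loop bounds.
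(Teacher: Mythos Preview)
Your proposal is correct and follows essentially the same approach as the paper, namely multiplying out the four nested loop bounds and accounting for the $O(n)$ cost of the two summations inside the innermost body. Your treatment is in fact considerably more detailed than the paper's one-line justification, since you also verify that the preprocessing of $\ve$, $\cal E$, and $\cal H$ stays within $O(n^4)$ and hence is absorbed by the $O(n^4 m)=O(n^6)$ main-loop cost.
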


\section{Conclusion and future works}
In this paper, we have introduced the connected coalition concept in graphs and we have studied some properties for the connected  coalition number. We characterized all graphs whose have a connected coalition partition. We have shown that  for any graph $G$ with $\delta(G)=1$ and with no full vertex, $CC(G)\leq n-1$. Also we proved that for any tree $T$, $CC(T)=2$. Finally, we have presented two polynomial-time algorithms that for a given connected graph $G$ of order $n$ determine whether $CC(G)=n$ or $CC(G)=n-1$. 

 There are many open problems  in the study of the connected coalition number  of a graph that we  state and close the paper with some of them. 
\begin{enumerate}
	\item What is the connected  coalition number of  graph operations, such as corona, Cartesian, join, lexicographic, and so on?
	
	\item What is the connected  coalition number of  natural and fractional powers of a graph (see e.g. \cite{BIMS})?
	
	\item What is the effects on $CC(G)$ when $G$ is modified by operations on vertex and edge of $G$?
	\item Similar to the coalition graph of $G$, it is natural to define and study the connected  coalition graph of $G$ for connected  coalition partition $\pi$, which can be denoted by $CCG(G,\pi)$, and is defined as follows. Corresponding to any connected coalition partition $\pi=\{V_1,V_2,\ldots, V_k\}$ in  a graph $G$, a  {\em connected coalition graph} $CCG(G, \pi)$ is  associated in which there is a one-to-one correspondence between the   vertices of $CCG(G,\pi)$  and the sets $V_1, V_2,...,V_k$ of $\pi$, 
	and two vertices of  $CCG(G,\pi)$  are adjacent if and only if their corresponding
	sets in $\pi$ form a connected  coalition. 
	
	
\end{enumerate} 
\medskip

\noindent{\bf Acknowledgement.} 
The work of Hamidreza Golmohammadi is supported by the Mathematical Center in Akademgorodok, under agreement No. 075-15-2022-282 with the Ministry of Science and High Education of the Russian Federation.

\end{document}